\newcommand{\inlineitem}[1][]{%
\ifnum\enit@type=\tw@
    {\descriptionlabel{#1}}
  \hspace{\labelsep}%
\else
  \ifnum\enit@type=\z@
       \refstepcounter{\@listctr}\fi
    \quad\@itemlabel\hspace{\labelsep}%
\fi} \makeatother
\newcommand{\subs}{\subset}
\newcommand{\ul}{\underline}
\newcommand{\us}{\underset}
\newcommand{\os}{\overset}
\newcommand{\equ}[1]{%
\begin{equation*}
#1
\end{equation*}
}
\newcommand{\equa}[1]{%
\begin{equation*}
\begin{aligned}
#1
\end{aligned}
\end{equation*}
}
\theoremstyle{plain}
\newtheorem{defn}[equation]{Definition}
\newtheorem{theorem}[equation]{Theorem}
\newtheorem{lemma}[equation]{Lemma}
\begin{document}

\title[An Inequality]{An Inequality}
\author[C.P. Anil Kumar]{C.P. Anil Kumar}
\address{Stat Math Unit, Indian Statistical Institute,
  8th Mile Mysore Road, Bangalore-560059, India}
\email{anilkcp@isibang.ac.in} \subjclass[2010]{primary 26D99}
\keywords{Arithmetic Mean,Harmonic Mean} \maketitle
\begin{abstract}
In this paper we prove that the weighted linear combination of products of the
$k$-subsets of an $n$-set of positive real numbers with weight being
the harmonic mean of their reciprocal sets is less than or equal to
uniformly weighted sum of products of the $k$-subsets with weight
being the harmonic mean of the whole reciprocal set.
\end{abstract}
\maketitle
\section{Introduction}
There is a version of this inequality for $2$-subsets of an $n$-set
which appears in ~\cite{OMPDSL2006} page $327$, Problem $4$ as a
short-listed problem for the Forty Seventh $IMO\ 2006$ held in
Ljubljana, Slovenia. This inequality has an interesting
generalization as stated in the Main
Theorem~\ref{theorem:SumProductSymmetricFunctionInequality} below.
\section{The Main Inequality}
\begin{defn}
Let $A=\{a_1,a_2,\ldots,a_n\}$ be an $n$-set of positive real
numbers. Here we allow the numbers to repeat. i.e. $a_i=a_j$ for
some $1 \leq i \neq j \leq n$. Let $S=\{i_1,i_2,\ldots,i_k\} \subs
\{1,2,3,\ldots,n\}$ be a $k$-subset for some $k \leq n$. Let
$B_S=\{a_{i_1},a_{i_2},a_{i_3},\ldots, a_{i_k}\} \subs A$ be its
corresponding set. The reciprocal set denoted by $B_S^{-1}$ of the
set $B_S$ is defined to be the set
$B_S^{-1}=\{\frac{1}{a_{i_1}},\frac{1}{a_{i_2}},\frac{1}{a_{i_3}},\ldots,
\frac{1}{a_{i_k}}\}$.
\end{defn}
Now we state the main theorem.
\begin{theorem}
\label{theorem:SumProductSymmetricFunctionInequality} Let $[\ul{n}]
= \{1,2,3,\ldots,n\}$ denote the set of first $n$ natural numbers.
Let $A=\{a_i: i=1, \ldots,n\}$ be an $n$-set of positive real
numbers. For any subset $S \subs [\ul{n}]$, let $\prod a_S$ denote
$\us{i \in S}{\prod} a_i$ and $\sum a_S$ denote $\us{i \in S}{\sum}
a_i$. Then
\begin{equation}
\us{k-subset\ S \subs [\ul{n}]}{\sum}\bigg(\frac{\prod a_S}{\sum
a_S}\bigg) \leq \frac{n}{k} \bigg(\frac{\us{k-subset\ S \subs
[\ul{n}]}{\sum} \prod a_S}{\sum a_{[\ul{n}]}}\bigg)
\end{equation}
i.e. The weighted linear combination of products of the
$k$-sets with weight as harmonic mean of their reciprocal sets  is
less than or equal to uniformly weighted sum of products of the $k$-
sets with weight the harmonic mean of the whole reciprocal set.

We also observe that the equality occurs if and only if $a_1=a_2=\ldots=a_n$.
\end{theorem}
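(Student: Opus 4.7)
The plan is to reduce the stated global inequality to a purely per-subset AM--HM statement on $(k+1)$-subsets, via an algebraic rearrangement followed by a combinatorial re-indexing. Writing $\sigma=\sum a_{[\ul{n}]}$, $T_S=\sum a_S$, $P_S=\prod a_S$, and $e_k=\sum_{|S|=k}P_S$, I would first multiply the target inequality through by $k\sigma$ and expand $\sigma=T_S+T_{S^c}$ (with $S^c=[\ul{n}]\setminus S$) inside the left-hand side; the piece $T_S/T_S=1$ cancels a copy of $k\,e_k$ from both sides, reducing the claim to the equivalent form
\[
k\sum_{|S|=k}\frac{P_S\,T_{S^c}}{T_S}\ \le\ (n-k)\,e_k.
\]

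Next I would re-index using the bijection between pairs $(S,j)$ with $|S|=k,\ j\notin S$ and pairs $(U,j)$ with $|U|=k+1,\ j\in U$ given by $U=S\cup\{j\}$. Under this map $a_jP_S=P_U$ and $T_S=T_U-a_j$, so the left side becomes $k\sum_{|U|=k+1}P_U\sum_{j\in U}\frac{1}{T_U-a_j}$. On the right, the same bijection together with $P_{U\setminus\{j\}}=P_U/a_j$ gives $(n-k)e_k=\sum_{(S,j):\,j\notin S}P_S=\sum_{|U|=k+1}P_U\sum_{j\in U}\frac{1}{a_j}$. Since $P_U>0$, it suffices to prove the per-$U$ inequality
\[
k\sum_{j\in U}\frac{1}{T_U-a_j}\ \le\ \sum_{j\in U}\frac{1}{a_j}\qquad\text{for each }|U|=k+1.
\]

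For this I would apply AM--HM to the $k$ positive numbers $\{a_i:i\in U,\,i\ne j\}$, whose sum equals $T_U-a_j$, obtaining $\frac{1}{T_U-a_j}\le\frac{1}{k^{2}}\sum_{i\in U,\,i\ne j}\frac{1}{a_i}$. Summing over $j\in U$ and interchanging the two sums, each reciprocal $1/a_i$ is counted exactly $k$ times (once for each $j\in U$ with $j\ne i$), which yields $\sum_{j\in U}\frac{1}{T_U-a_j}\le\frac{1}{k}\sum_{i\in U}\frac{1}{a_i}$, the required per-$U$ bound. Multiplying by $P_U>0$ and summing over $U$ then closes the argument.

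The main obstacle is spotting the correct lift. A direct AM--HM applied at the level of $k$-subsets --- bounding $1/T_S$ by $\frac{1}{k^{2}}\sum_{i\in S}1/a_i$ --- is genuinely too weak and already fails to imply the theorem for $n=3,\,k=2,\,(a_1,a_2,a_3)=(1,2,3)$; it is precisely the raising of the bookkeeping one cardinality higher that makes AM--HM strong enough. For the equality case, AM--HM is tight on $\{a_i:i\in U,\,i\ne j\}$ only when those $k$ numbers coincide; demanding this for every $j\in U$ and every $(k+1)$-subset $U$ forces $a_1=\cdots=a_n$ (at least in the non-trivial range $1<k<n$).
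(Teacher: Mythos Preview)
Your proposal is correct and follows essentially the same route as the paper: after the normalization/cancellation that reduces the claim to $k\sum_{|S|=k}\frac{P_S T_{S^c}}{T_S}\le (n-k)e_k$, both arguments lift the bookkeeping to $(k+1)$-subsets $U$ and finish with the per-$U$ AM--HM bound $k\sum_{j\in U}\frac{1}{T_U-a_j}\le\sum_{j\in U}\frac{1}{a_j}$, which is exactly the paper's Lemma~\ref{lemma:Reciprocal}. Your caveat that the equality characterization only applies in the range $1<k<n$ is a correct refinement of the paper's equality statement.
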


\section{Some Simple Cases of the General Inequality}
We prove a few lemmas.
\begin{lemma}
\label{lemma:ReciprocalTriples} Let $a_1,a_2,a_3$ be three positive
real numbers. Then the sum of the reciprocals of $a_i$ is greater
than or equal to sum of the reciprocals of their pairwise averages.
\begin{equation}
\frac{1}{a_1}+\frac{1}{a_2}+\frac{1}{a_3} \geq
\frac{1}{\frac{a_1+a_2}{2}} + \frac{1}{\frac{a_2+a_3}{2}} +
\frac{1}{\frac{a_3+a_1}{2}}
\end{equation}
\end{lemma}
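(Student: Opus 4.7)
The plan is to deduce the three-variable inequality from the classical two-variable AM-HM (or equivalently Cauchy-Schwarz) inequality applied pairwise, then sum.

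First I would recall that for any two positive reals $x,y$ one has $\frac{x+y}{2} \geq \frac{2}{\frac{1}{x}+\frac{1}{y}}$, which rearranges to
\equ{\frac{1}{x}+\frac{1}{y} \geq \frac{4}{x+y}, \text{ equivalently } \frac{1}{2}\left(\frac{1}{x}+\frac{1}{y}\right) \geq \frac{2}{x+y}.}
Equality holds if and only if $x=y$.

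Next I would apply this inequality to each of the three pairs $(a_1,a_2)$, $(a_2,a_3)$, $(a_3,a_1)$, obtaining
\equ{\frac{1}{2}\left(\frac{1}{a_1}+\frac{1}{a_2}\right) \geq \frac{2}{a_1+a_2}, \quad \frac{1}{2}\left(\frac{1}{a_2}+\frac{1}{a_3}\right) \geq \frac{2}{a_2+a_3}, \quad \frac{1}{2}\left(\frac{1}{a_3}+\frac{1}{a_1}\right) \geq \frac{2}{a_3+a_1}.}
Adding these three inequalities, each $\frac{1}{a_i}$ appears exactly twice on the left, so the factor $\frac{1}{2}$ collapses and the left-hand side becomes $\frac{1}{a_1}+\frac{1}{a_2}+\frac{1}{a_3}$, while the right-hand side is exactly the target sum. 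Equality in the combined inequality forces equality in each of the three pairwise AM-HM inequalities, which forces $a_1=a_2=a_3$.

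There is no real obstacle here — the only thing to be careful about is bookkeeping the factor of $2$ and verifying that the weights match so that the left-hand side telescopes into the simple sum of reciprocals rather than something heavier. This lemma is essentially the $k=2$ toy case of the main theorem and its proof scheme (pairwise AM-HM plus summation) is what one would expect to generalize in later sections.
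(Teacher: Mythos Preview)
Your proof is correct and follows essentially the same route as the paper: apply the two-variable AM--HM inequality to each pair $(a_i,a_j)$ and sum the three resulting inequalities so that the left side collapses to $\sum \frac{1}{a_i}$. The paper phrases the pairwise step as applying AM--HM to the reciprocals $\frac{1}{a_i}$, but this is the same computation.
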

\begin{proof}
We have from $AM-HM$ inequality applying to the reciprocals
$\frac{1}{a_1},\frac{1}{a_2},\frac{1}{a_3}$ we get
\equa{\frac{\frac{1}{a_1}+\frac{1}{a_2}}{2} &\geq \frac{2}{a_1+a_2}\\
\frac{\frac{1}{a_2}+\frac{1}{a_3}}{2} &\geq \frac{2}{a_2+a_3}\\
\frac{\frac{1}{a_3}+\frac{1}{a_1}}{2} &\geq \frac{2}{a_3+a_1}} and
adding these inequalities we have
\begin{equation*}
\frac{1}{a_1}+\frac{1}{a_2}+\frac{1}{a_3} \geq
\frac{1}{\frac{a_1+a_2}{2}} + \frac{1}{\frac{a_2+a_3}{2}} +
\frac{1}{\frac{a_3+a_1}{2}}
\end{equation*}
Hence the Lemma~\ref{lemma:ReciprocalTriples} follows.
\end{proof}
\begin{lemma}
\label{lemma:MainThree}
Let $a_1,a_2,a_3$ be three positive real
numbers. Then
\begin{equation}
\frac{a_1a_2}{a_1+a_2}+\frac{a_2a_3}{a_2+a_3}+\frac{a_3a_1}{a_1+a_3}
\leq \frac{3(a_1a_2+a_2a_3+a_3a_1)}{2(a_1+a_2+a_3)}
\end{equation}
\end{lemma}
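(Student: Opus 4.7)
The plan is to reduce this inequality directly to Lemma~\ref{lemma:ReciprocalTriples} by clearing denominators. Since both sides are positive and $s := a_1+a_2+a_3 > 0$, the target inequality is equivalent to
\equa{2(a_1+a_2+a_3)\sum_{\mathrm{cyc}}\frac{a_1a_2}{a_1+a_2} \leq 3(a_1a_2+a_2a_3+a_3a_1).}

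The key observation is that for the pair $\{i,j\}$ with third index $k$, we have $s = (a_i+a_j) + a_k$, so
\equa{\frac{2s}{a_i+a_j} = 2 + \frac{2a_k}{a_i+a_j},}
which gives the identity $\frac{2s\,a_ia_j}{a_i+a_j} = 2a_ia_j + \frac{2a_1a_2a_3}{a_i+a_j}$. Summing over the three cyclic pairs, the left-hand side becomes $2(a_1a_2+a_2a_3+a_3a_1) + 2a_1a_2a_3\sum_{\mathrm{cyc}}\frac{1}{a_i+a_j}$. Thus the inequality reduces to
\equa{2a_1a_2a_3\left(\frac{1}{a_1+a_2}+\frac{1}{a_2+a_3}+\frac{1}{a_3+a_1}\right) \leq a_1a_2+a_2a_3+a_3a_1.}

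The final step is to recognize that this is precisely Lemma~\ref{lemma:ReciprocalTriples} multiplied through by $a_1a_2a_3$: indeed, $a_1a_2a_3\left(\tfrac{1}{a_1}+\tfrac{1}{a_2}+\tfrac{1}{a_3}\right) = a_2a_3+a_1a_3+a_1a_2$, and Lemma~\ref{lemma:ReciprocalTriples} asserts $\sum \tfrac{1}{a_i} \geq \sum\tfrac{2}{a_i+a_j}$. Multiplying by the positive quantity $a_1a_2a_3$ preserves the inequality and yields exactly what is needed.

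There is no real obstacle; the only mildly nontrivial step is spotting the algebraic identity $\frac{2s}{a_i+a_j} = 2 + \frac{2a_k}{a_i+a_j}$ that lets us peel off the ``easy'' part $2e_2$ and leaves a residual term of the form $2a_1a_2a_3\sum\frac{1}{a_i+a_j}$ that is exactly handled by the preceding lemma. Equality in Lemma~\ref{lemma:ReciprocalTriples} holds iff $a_1=a_2=a_3$ (since equality in each pairwise AM-HM forces pairwise equality), so equality in the present lemma likewise holds iff $a_1=a_2=a_3$, consistent with the equality case announced in Theorem~\ref{theorem:SumProductSymmetricFunctionInequality}.
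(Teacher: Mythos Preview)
Your proof is correct and follows essentially the same route as the paper's: the paper normalizes to $a_1+a_2+a_3=1$ and then subtracts $2\sum a_ia_j$, which is algebraically identical to your step of multiplying through by $2s$ and using $\frac{2s}{a_i+a_j}=2+\frac{2a_k}{a_i+a_j}$; both reductions land on the same residual inequality $2a_1a_2a_3\sum\frac{1}{a_i+a_j}\le \sum a_ia_j$, which is Lemma~\ref{lemma:ReciprocalTriples} multiplied by $a_1a_2a_3$.
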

\begin{proof}
In order to prove Lemma~\ref{lemma:MainThree} first we make a
simplification by assuming without loss of generality that
$a_1+a_2+a_3=1$. This can be done by normalizing with $a_1+a_2+a_3$.
Now
\equa{&2\frac{a_1a_2}{a_1+a_2}+2\frac{a_2a_3}{a_2+a_3}+2\frac{a_3a_1}{a_1+a_3}-2(a_1a_2+a_2a_3+a_3a_1)\\
&=a_1a_2a_3\bigg(\frac{1}{\frac{a_1+a_2}{2}} +
\frac{1}{\frac{a_2+a_3}{2}}
+ \frac{1}{\frac{a_3+a_1}{2}}\bigg)\\
&\leq a_1a_2a_3\bigg(\frac{1}{a_1}+\frac{1}{a_2}+\frac{1}{a_3}\bigg)
\text{ Using Lemma~\ref{lemma:ReciprocalTriples} }\\
&=a_1a_2+a_2a_3+a_3a_1} Hence the Lemma~\ref{lemma:MainThree}
follows.
\end{proof}
\begin{lemma}
\label{lemma:Reciprocal} Let $a_1,a_2,a_3,\ldots,a_n$ be $n$
positive real numbers. Then the sum of the reciprocals of $a_i$ is
greater than or equal to the sum of the reciprocals of their
$(n-1)$-wise averages.
\begin{equation}
\label{eq:Reciprocal}
\begin{aligned}
\frac{1}{a_1}+\frac{1}{a_2}+\frac{1}{a_3}+ \ldots + \frac{1}{a_n} & \geq \frac{1}{\frac{a_1+a_2+\ldots+a_{n-1}+a_n-a_n}{n-1}}+ \frac{1}{\frac{a_1+a_2+a_3+\ldots+a_n-a_{n-1}}{n-1}}\\
& + \frac{1}{\frac{a_1+a_2+a_3+ \ldots + a_n-a_{n-2}}{n-1}} + \ldots
+ \frac{1}{\frac{a_1+a_2+a_3+\ldots+a_n-a_{1}}{n-1}}
\end{aligned}
\end{equation}
\end{lemma}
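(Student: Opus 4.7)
The plan is to mimic the proof of Lemma~\ref{lemma:ReciprocalTriples} verbatim, but applied to $(n-1)$-subsets of reciprocals rather than to pairs. Specifically, for each index $i \in \{1,2,\ldots,n\}$, the reciprocals $\{1/a_j : j \neq i\}$ form a set of $n-1$ positive numbers, and the AM-HM inequality applied to this set yields
\equa{
\frac{1}{n-1}\sum_{j\neq i}\frac{1}{a_j} \;\geq\; \frac{n-1}{\sum_{j\neq i} a_j} \;=\; \frac{1}{\frac{a_1+a_2+\ldots+a_n-a_i}{n-1}}.
}
This produces one inequality per choice of omitted index $i$, giving $n$ inequalities in total, each of whose right-hand side is exactly one of the $n$ summands on the right-hand side of \eqref{eq:Reciprocal}.

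Next, I would add these $n$ inequalities. On the right-hand side the terms assemble directly into the right-hand side of \eqref{eq:Reciprocal}. On the left-hand side, each $1/a_j$ is counted once for every $i \neq j$, hence exactly $n-1$ times, so the total left-hand side of the summed inequalities is
\equa{
\frac{1}{n-1}\sum_{i=1}^n\sum_{j\neq i}\frac{1}{a_j} \;=\; \frac{1}{n-1}\cdot (n-1)\sum_{j=1}^n \frac{1}{a_j} \;=\; \sum_{j=1}^n \frac{1}{a_j},
}
which is precisely the left-hand side of \eqref{eq:Reciprocal}. Hence the conclusion follows immediately.

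There is no real obstacle here: the lemma is a direct generalization of Lemma~\ref{lemma:ReciprocalTriples}, and the only bookkeeping step is verifying that the normalizing factor $\tfrac{1}{n-1}$ introduced by the AM-HM step is exactly cancelled by the multiplicity $(n-1)$ with which each reciprocal $1/a_j$ appears when the $n$ AM-HM inequalities are summed. This same combinatorial cancellation is what makes the $n=3$ case in Lemma~\ref{lemma:ReciprocalTriples} work (where each $1/a_j$ appeared twice and the prefactor was $1/2$), so the present statement is essentially a ``same proof, larger index set'' affair.
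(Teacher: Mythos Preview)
Your proposal is correct and matches the paper's own proof essentially verbatim: apply the AM--HM inequality to each $(n-1)$-subset $\{1/a_j : j\neq i\}$ of reciprocals and then sum the resulting $n$ inequalities, using that each $1/a_j$ appears exactly $n-1$ times on the left. There is nothing to add.
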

\begin{proof}
This is a generalization of Lemma~\ref{lemma:ReciprocalTriples} to
$n$-positive real numbers $a_1,a_2,\ldots,a_n$. We have from $AM-HM$
inequality applying to the reciprocals
$\frac{1}{a_1},\frac{1}{a_2},\frac{1}{a_3},\ldots,\frac{1}{a_n}$ we
get the following set of inequalities. For every $1 \leq j \leq n$,
we get \equ{\frac{1}{(n-1)}\bigg(\us{i \neq j,1=1}{\os{n}{\sum}}
\frac{1}{a_i}\bigg) \geq \frac{(n-1)}{\us{i \neq j,
i=1}{\os{n}{\sum}} a_i}} and adding these inequalities we have
\begin{equation*}
\begin{aligned}
\frac{1}{a_1}+\frac{1}{a_2}+\frac{1}{a_3}+ \ldots + \frac{1}{a_n} & \geq \frac{1}{\frac{a_1+a_2+\ldots+a_{n-1}+a_n-a_n}{n-1}}+ \frac{1}{\frac{a_1+a_2+a_3+\ldots+a_n-a_{n-1}}{n-1}}\\
& + \frac{1}{\frac{a_1+a_2+a_3+ \ldots + a_n-a_{n-2}}{n-1}} + \ldots
+ \frac{1}{\frac{a_1+a_2+a_3+\ldots+a_n-a_{1}}{n-1}}
\end{aligned}
\end{equation*}
Hence the Lemma~\ref{lemma:Reciprocal} follows.
\end{proof}
\begin{lemma}
Let $[\ul{n}] = \{1,2,3,\ldots,n\}$ denote the set of first $n$
natural numbers. Let $A=\{a_i: i=1,\ldots, n\}$ be a set of $n$
positive real numbers. Then
\begin{equation}
\us{(n-1)-subset\ S \subs [\ul{n}]}{\sum}\bigg(\frac{\prod a_S}{\sum
a_S}\bigg) \leq \frac{n}{(n-1)} \bigg(\frac{\us{(n-1)-subset\ S
\subs [\ul{n}]}{\sum} \prod a_S}{\sum a_{[\ul{n}]}}\bigg)
\end{equation}
\end{lemma}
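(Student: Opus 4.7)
The plan is to reduce this $(n-1)$-subset inequality to Lemma~\ref{lemma:Reciprocal} by exploiting the fact that the $(n-1)$-subsets of $[\ul{n}]$ are in bijection with the singletons via complementation. Writing $T = \sum a_{[\ul{n}]}$ and $P = \prod_{i=1}^n a_i$, for the $(n-1)$-subset $S_j = [\ul{n}]\setminus\{j\}$ we have $\prod a_{S_j} = P/a_j$ and $\sum a_{S_j} = T - a_j$. After substituting these, the desired inequality becomes
\equ{
P\sum_{j=1}^n \frac{1}{a_j(T-a_j)} \;\leq\; \frac{n\,P}{(n-1)\,T}\sum_{j=1}^n \frac{1}{a_j},
}
and the factor $P$ cancels from both sides.

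Next I would use the partial-fraction identity
$\frac{1}{a_j(T-a_j)} = \frac{1}{T}\left(\frac{1}{a_j}+\frac{1}{T-a_j}\right)$, which reduces the inequality to
\equ{
\sum_{j=1}^n \frac{1}{a_j} + \sum_{j=1}^n \frac{1}{T-a_j} \;\leq\; \frac{n}{n-1}\sum_{j=1}^n \frac{1}{a_j},
}
and after subtracting $\sum_j 1/a_j$ from both sides this collapses to
\equ{
\sum_{j=1}^n \frac{n-1}{T-a_j} \;\leq\; \sum_{j=1}^n \frac{1}{a_j}.
}

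Finally, I would observe that $\frac{T-a_j}{n-1}$ is precisely the $(n-1)$-wise average $\frac{a_1+a_2+\cdots+a_n - a_j}{n-1}$ appearing in the right-hand side of \eqref{eq:Reciprocal}, so the last displayed inequality is exactly Lemma~\ref{lemma:Reciprocal}, and the proof is complete. The only ``obstacle'' is the small amount of algebraic bookkeeping to rewrite the LHS in a form where the partial-fraction trick works; there is no genuinely new analytic content beyond the AM--HM inequality already encoded in Lemma~\ref{lemma:Reciprocal}. For the equality case, tracing through the reduction shows equality throughout iff we have equality in every AM--HM step used to prove Lemma~\ref{lemma:Reciprocal}, which forces $a_1 = a_2 = \cdots = a_n$.
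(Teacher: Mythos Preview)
Your proof is correct and follows essentially the same approach as the paper: both reduce the $(n-1)$-subset inequality directly to Lemma~\ref{lemma:Reciprocal} via elementary algebra. The only cosmetic difference is that the paper (following the template of Lemma~\ref{lemma:MainThree}) normalizes to $\sum a_i=1$ and computes the difference $(n-1)\sum_S \frac{\prod a_S}{\sum a_S}-(n-1)\sum_S \prod a_S$, whereas you keep $T=\sum a_i$ general and use the partial-fraction identity $\frac{1}{a_j(T-a_j)}=\frac{1}{T}\big(\frac{1}{a_j}+\frac{1}{T-a_j}\big)$; both manipulations land on exactly the same inequality $\sum_j \frac{n-1}{T-a_j}\le \sum_j \frac{1}{a_j}$.
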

\begin{proof}
This is a generalization of the above Lemma~\ref{lemma:MainThree} to
the case of $(n-1)$-subsets of an $n$-set. The proof is similar to
the proof of Lemma~\ref{lemma:MainThree} except here we use
Lemma~\ref{lemma:Reciprocal} instead of
Lemma~\ref{lemma:ReciprocalTriples}.
\end{proof}
\begin{lemma}
\label{lemma:MainTwoSets} Let $A=\{a_i: i=1,\ldots,n\}$ be a set of
$n$-positive real numbers. Then
\begin{equation}
\us{i<j}{\sum}\frac{a_ia_j}{a_i+a_j} \leq
\frac{n}{2(a_1+a_2+\ldots+a_n)} \us{i<j}{\sum}a_ia_j
\end{equation}
\end{lemma}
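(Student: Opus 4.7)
The plan is to mirror the proof of Lemma~\ref{lemma:MainThree}, replacing its single use of Lemma~\ref{lemma:ReciprocalTriples} by one application per $3$-subset of the index set. Write $S = a_1 + a_2 + \cdots + a_n$ and $e_2 = \sum_{i<j} a_ia_j$. Clearing the denominator on the right, the desired inequality is equivalent to
\[
 2S \sum_{i<j} \frac{a_ia_j}{a_i+a_j} \;\leq\; n\, e_2,
\]
and I would work with this form.

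The first step is to split $S$ inside each summand on the left using the identity $S = (a_i + a_j) + \sum_{k \neq i,j} a_k$. This yields
\[
 2S \sum_{i<j} \frac{a_ia_j}{a_i+a_j} \;=\; 2e_2 \;+\; \sum_{i<j}\sum_{k \neq i,j} \frac{2\,a_ia_ja_k}{a_i+a_j}.
\]
The key observation is that in the remaining double sum, each datum $(\{i,j\}, k)$ with $k \notin \{i,j\}$ corresponds to a $3$-subset $\{p,q,r\}$ together with a choice of distinguished element (namely $k$). Regrouping by $3$-subsets, the contribution from each $\{p,q,r\}$ is
\[
 2\,a_pa_qa_r \left( \frac{1}{a_q+a_r} + \frac{1}{a_p+a_r} + \frac{1}{a_p+a_q} \right).
\]

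I would then apply Lemma~\ref{lemma:ReciprocalTriples} to each $3$-subset, which bounds the parenthetical factor by $\tfrac{1}{2}\bigl(\tfrac{1}{a_p} + \tfrac{1}{a_q} + \tfrac{1}{a_r}\bigr)$. Multiplying by $2a_pa_qa_r$ collapses this to the clean upper bound $a_qa_r + a_pa_r + a_pa_q$ for each $3$-subset. Summing over the $\binom{n}{3}$ triples, each pair $\{i,j\}$ appears exactly once for each of the $n-2$ possible third elements, so
\[
 \sum_{i<j}\sum_{k \neq i,j} \frac{2\,a_ia_ja_k}{a_i+a_j} \;\leq\; (n-2)\, e_2.
\]
Combining with the earlier identity gives $2S \sum_{i<j} \frac{a_ia_j}{a_i+a_j} \leq 2e_2 + (n-2)e_2 = n\, e_2$, which is the desired inequality.

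The only step demanding real care is recognizing the hidden $3$-subset structure in the double sum $\sum_{i<j}\sum_{k \neq i,j} \frac{2\,a_ia_ja_k}{a_i+a_j}$; once that combinatorial rearrangement is in place, Lemma~\ref{lemma:ReciprocalTriples} does all of the analytic work, and the rest is bookkeeping. Equality throughout requires equality in every application of Lemma~\ref{lemma:ReciprocalTriples}, which forces $a_1 = a_2 = \cdots = a_n$.
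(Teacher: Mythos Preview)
Your proof is correct and follows essentially the same approach as the paper: the paper normalizes to $\sum a_i = 1$ whereas you clear the denominator by multiplying through by $2S$, but both proofs then perform the same decomposition into the $2e_2$ term plus the triple sum, regroup by $3$-subsets, and apply Lemma~\ref{lemma:ReciprocalTriples} to each triple. The counting that each pair appears in exactly $n-2$ triples, yielding the bound $(n-2)e_2$, is identical in both arguments.
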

\begin{proof}
The proof is as follows. Again by normalizing with
$\us{i=1}{\os{n}{\sum}} a_i$ we can assume that
$\us{i=1}{\os{n}{\sum}} a_i=1$ and it is enough to prove that
\begin{equation}
\us{i<j}{\sum}\frac{a_ia_j}{a_i+a_j} \leq \frac{n}{2}
\us{i<j}{\sum}a_ia_j
\end{equation}
So consider
\equa{&2\us{i<j}{\sum}\frac{a_ia_j}{a_i+a_j}-2\us{i<j}{\sum}a_ia_j
=\us{i<j}{\sum}\frac{2}{a_i+a_j}\bigg(\us{k \neq i,k \neq
j}{\sum}a_ia_ja_k\bigg)\\
&=\bigg(\us{3-subset\ S \subs [\ul{n}]}{\sum} \bigg(\us{2-subset\ T
\subs S}{\sum} \frac{2\prod a_S}{\sum a_T}\bigg)\bigg)\\
&\text{ Now using Lemma~\ref{lemma:ReciprocalTriples} for all } 3
\text{ subsets of }\{1,2,3,\ldots,n\} \text{ we get } \\
&\leq (n-2)\bigg(\us{2-subset\ S \subs [\ul{n}]}{\sum} \prod
a_S\bigg)} Hence the lemma follows.
\end{proof}

\section{Proof of the Main Theorem}
Here we prove the Main
Theorem~\ref{theorem:SumProductSymmetricFunctionInequality}
\begin{proof} Now we generalize to the case given in the
Theorem~\ref{theorem:SumProductSymmetricFunctionInequality} by first
normalizing the inequality with $\sum a_i$ so that we can assume
that $\sum a_i=1$. And we have to proof the following inequality
\begin{equation*}
\us{k-subset\ S \subs [\ul{n}]}{\sum}\bigg(\frac{\prod
a_S}{\frac{\sum a_S}{k}}\bigg) \leq n\bigg(\us{k-subset\ S \subs
[\ul{n}]}{\sum} \prod a_S\bigg)
\end{equation*}
We have
\begin{equation*}
\begin{aligned}
&\us{k-subset\ S \subs [\ul{n}]}{\sum}\bigg(k\frac{\prod a_S}{\sum a_S}\bigg)-k\bigg(\us{k-subset\ S \subs [\ul{n}]}{\sum} \prod a_S\bigg)\\
&= k\bigg(\us{k-subset\ S \subs [\ul{n}]}{\sum} \frac{\prod a_S(1-\sum a_S)}{\sum a_S}\bigg)\\
&=k\bigg(\us{(k+1)-subset\ S \subs [\ul{n}]}{\sum} \bigg(\us{k-subset\ T \subs S}{\sum} \frac{\prod a_S}{\sum a_T}\bigg)\bigg)\\
&\leq(n-k)\bigg(\us{k-subset\ S \subs [\ul{n}]}{\sum} \prod
a_S\bigg) \text{ Using inequality~\ref{eq:Reciprocal} in
Lemma~\ref{lemma:Reciprocal}}
\end{aligned}
\end{equation*}

The equality occurs when if and only all the AM-HM inequalities involved give equality which holds if and only if 
$a_1=a_2=\ldots=a_n$.
Hence the Main
Theorem~\ref{theorem:SumProductSymmetricFunctionInequality} follows.
\end{proof}

\section{Acknowledgements}
The author likes to thank Prof. C.R. Pranesachar, HBCSE, TIFR,
Mumbai for mentioning this problem on inequalities given in
Lemma~\ref{lemma:MainTwoSets} which the author was able to suitably
generalize to the Main
Theorem~\ref{theorem:SumProductSymmetricFunctionInequality} in this
article.

\bibliographystyle{abbrv}

\begin{thebibliography}{1}
\bibitem{OMPDSL2006}
D.~Dujukic,V.~Janokovic,I.~Matic,N.~Petrovic. {The IMO Compendium},
Problem Books in Mathematics.
\end{thebibliography}
\def\cprime{$'$}

\end{document}